\newtheorem{theorem}{Theorem}
\newtheorem{remark}[theorem]{Remark}
\providecommand{\keywords}[1]
{
  \small	
  \textbf{\textit{Keywords:}} #1
}
\begin{document}

\title{A compound Poisson perspective of Ewens-Pitman sampling model}


\author[1]{Emanuele Dolera\thanks{emanuele.dolera@unipv.it}}
\author[2]{Stefano Favaro \thanks{stefano.favaro@unito.it}}
\affil[1]{\small{Department of Mathematics, University of Pavia, Italy}}
\affil[2]{\small{Department of Economics and Statistics, University of Torino and Collegio Carlo Alberto, Italy}}

\maketitle

\begin{abstract}
The Ewens-Pitman sampling model (EP-SM) is a distribution for random partitions of the set $\{1,\ldots,n\}$, with $n\in\mathbb{N}$, which is index by real parameters $\alpha$ and $\theta$ such that either $\alpha\in[0,1)$ and $\theta>-\alpha$, or $\alpha<0$ and $\theta=-m\alpha$ for some $m\in\mathbb{N}$. For $\alpha=0$ the EP-SM reduces to the celebrated Ewens sampling model (E-SM), which admits a well-known compound Poisson perspective in terms of the log-series compound Poisson sampling model (LS-CPSM). In this paper, we consider a generalization of the LS-CPSM, which is referred to as the negative Binomial compound Poisson sampling model (NB-CPSM), and we show that it leads to extend the compound Poisson perspective of the E-SM to the more general EP-SM for either $\alpha\in(0,1)$, or $\alpha<0$. The interplay between the NB-CPSM and the EP-SM is then applied to the study of the large $n$ asymptotic behaviour of the number of blocks in the corresponding random partitions, leading to a new proof of Pitman's $\alpha$ diversity. We discuss the proposed results, and conjecture that analogous compound Poisson representations may hold for the class of $\alpha$-stable Poisson-Kingman sampling models, of which the EP-SM is a noteworthy special case. 
\end{abstract}

\keywords{Berry-Esseen type theorem; Ewens-Pitman sampling model; exchangeable random partitions; Log-series compound Poisson sampling model; Mittag-Leffler distribution function; negative Binomial compound Poisson sampling model; Pitman's $\alpha$-diversity; Wright distribution function}


\section{Introduction}

The Pitman-Yor process is a discrete random probability measure indexed by real parameters $\alpha$ and $\theta$ such that either $\alpha\in[0,1)$ and $\theta>-\alpha$, or $\alpha<0$ and $\theta=-m\alpha$ for some $m\in\mathbb{N}$. See, e.g., the works of \citet{Per(92)},  \citet{Pit(95)} and \citet{Pit(97)}. Let $\{V_i\}_{i\geq1}$ be independent random variables such that $V_i$ is distributed as a Beta distribution with parameter $(1-\alpha,\theta+i\alpha)$, for $i\geq1$, with the convention for $\alpha<0$ that $V_{m}=1$ and $V_{i}$ is undefined for $i>m$. If $P_1 := V_1$ and $P_i := V_i\prod_{1\leq j\leq i-1}(1-V_j)$ for $i \geq 2$, such that $\sum_{i\geq1}P_i=1$ almost surely, then the Pitman-Yor process is the random probability measure $\tilde{\mathfrak{p}}_{\alpha,\theta}$ on $(\mathbb{N},2^{\mathbb{N}})$ such that $\tilde{\mathfrak{p}}_{\alpha,\theta}(\{i\}) = P_i$ for $i\geq1$. The Dirichlet process \citep{Fer(73)} arises for $\alpha=0$. Because of the discreteness of $\tilde{\mathfrak{p}}_{\alpha,\theta}$, a random sample $(X_1,\ldots,X_n)$ induces a random partition $\Pi_{n}$ of $\{1,\ldots,n\}$ by means of the equivalence $i\sim j\iff X_i = X_j$ \citep{Pit(06)}. Let $K_{n}(\alpha,\theta):=K_{n}(X_{1},\ldots,X_{n})\leq n$ be the number of blocks of $\Pi_{n}$ and let $M_{r,n}(\alpha,\theta):=M_{r,n}(X_{1},\ldots,X_{n})$, for $r=1,\ldots,n$, be the number of blocks with frequency $r$ of $\Pi_{n}$ with $\sum_{1\leq r\leq n}M_{r,n}=K_{n}$ and $\sum_{1\leq r\leq n}rM_{r,n}=n$. \citet{Pit(95)} showed that
\begin{align}\label{eq_ewe_mod}
&\text{Pr}[(M_{1,n}(\alpha,\theta),\ldots,M_{n,n}(\alpha,\theta))=(x_{1},\ldots,x_{n})]=n!\frac{\left(\frac{\theta}{\alpha}\right)_{(\sum_{i=1}^{n}x_{i})}}{(\theta)_{(n)}}\prod_{i=1}^{n}\frac{\left(\frac{\alpha(1-\alpha)_{(i-1)}}{i!}\right)^{x_{i}}}{x_{i}!},
\end{align}
with $(x)_{(n)}$ being the ascending factorial of $x$ of order $n$, i.e. $(x)_{(n)}:=\prod_{0\leq i\leq n-1 }(x+i)$. The distribution \eqref{eq_ewe_mod} is referred to as Ewens-Pitman sampling model (EP-SM), and for $\alpha=0$ it reduces to the celebrated Ewens sampling model (E-SM) introduced by \citet{Ewe(72)}. The Pitman-Yor process plays a critical role in a variety of research areas, such as mathematical population genetics, Bayesian nonparametric statistics, statistical machine learning, excursion theory, combinatorics and statistical physics. See \citet{Pit(06)} and \citet{Cra(16)} for a comprehensive treatment of this subject.

The E-SM admits a well-known compound Poisson perspective in terms of the log-series compound Poisson sampling model (LS-CPSM). See \citet{Cha(07)}, and references therein, for an overview on compound Poisson models and generalizations thereof. In particular, we consider a generic population of individuals with a random number $K$ of distinct types, and let $K$ be distributed according to a Poisson distribution with parameter $\lambda=-z\log(1-q)$ for $q\in(0,1)$ and $z>0$. For $i\in\mathbb{N}$ let $N_{i}$ denote the random number of individuals of type $i$ in the population, and let the $N_{i}$'s to be independent of $K$ and independent each other, with the same distribution of the form
\begin{equation}\label{eq:logser}
\text{Pr}[N_{1}=x]=-\frac{1}{x\log(1-q)}q^{x}
\end{equation}
for $x\in\mathbb{N}$. Let $S=\sum_{1\leq i\leq K}N_{i}$ and let $M_{r}=\sum_{1\leq i\leq K}\mathbbm{1}_{\{N_{i}=r\}}$ for $r=1,\ldots,S$, that is $M_{r}$ is the random number of $N_{i}$'s equal to $r$ such that $\sum_{r\geq1}M_{r}=K$ and $\sum_{r\geq1}rM_{r}=S$. If $(M_{1}(z,n),\ldots,M_{n}(z,n))$ denotes a random variable whose distribution coincides with the conditional distribution of $(M_{1},\ldots,M_{S})$ given $S=n$, then \citep[Section 3]{Cha(07)} it holds that
\begin{align}\label{eq:sampling_0}
&\text{Pr}[(M_{1}(z,n),\ldots,M_{n}(z,n))=(x_{1},\ldots,x_{n})]=\frac{n!}{(z)_{(n)}}\prod_{i=1}^{n}\frac{\left(\frac{z}{i}\right)^{x_{i}}}{x_{i}!}.
\end{align}
The distribution \eqref{eq:sampling_0} is referred to as the LS-CPSM, and it equivalent to the E-SM. That is, the distribution displayed in \eqref{eq:sampling_0} coincides with the distribution \eqref{eq_ewe_mod} with $\alpha=0$. Therefore, the distributions of $K(z,n)=\sum_{1\leq r\leq n}M_{r}(z,n)$ and $M_{r}(z,n)$ coincide with the distributions of $K_{n}(0,z)$ and $M_{r,n}(0,z)$, respectively. Let $\stackrel{w}{\longrightarrow}$ denote the weak convergence for random variables. From the work of  \citet{Kor(73)}, $K(z,n)/\log n\stackrel{w}{\longrightarrow}z$ as $n\rightarrow+\infty$, whereas from \citet{Ewe(72)} it follows that $M_{r}(z,n)\stackrel{w}{\longrightarrow}P_{z/r}$ as $n\rightarrow+\infty$, where $P_{z}$ is a Poisson random variable with parameter $z$. 

In this paper, we consider a generalization of the LS-CPSM, which is referred to as the negative Binomial compound Poisson sampling model (NB-CPSM). In particular, the NB-CPSM is indexed by a pair of real parameters $\alpha$ and $z$ such that either $\alpha\in(0,1)$ and $z>0$, or $\alpha<0$ and $z<0$. The LS-CPSM is recovered by letting $\alpha\rightarrow0$ and $z>0$. We show that the NB-CPSM leads to extend the compound Poisson perspective of the E-SM to the more general EP-SM for either $\alpha\in(0,1)$, or $\alpha<0$. That is, we show that: i) for $\alpha\in(0,1)$ the EP-SM \eqref{eq_ewe_mod} admits a representation as a randomized NB-CPSM with $\alpha\in(0,1)$ and $z>0$, where the randomization acts on $z$ with respect a scale mixture between a Gamma and a scaled Mittag-Leffler distribution \citep{Pit(06)}; ii) for $\alpha<0$ the NB-CPSM admits a representation in terms of a randomized EP-SM with $\alpha<0$ and $\theta=-m\alpha$ for some $m\in\mathbb{N}$, where the randomization acts on $m$ with respect to a tilted Poisson distribution arising from the Wright function \citep{Wri(35)}. The interplay between the NB-CPSM and the EP-SM is then applied to the study of the large $n$ asymptotic behaviour of the number of distinct blocks in the random partitions induced by the corresponding sampling models. In particular, by combining the randomized representation in i) with the large $n$ asymptotic behaviour or the number of distinct blocks under the NB-CPSM, we present a new proof of Pitman's $\alpha$-diversity \citep{Pit(06)}, namely the large $n$ asymptotic behaviour of $K_{n}(\alpha,\theta)$ under the EP-SM.


\section{A compound Poisson perspective of EP-SM}\label{sec21}

We start by introducing the NB-CPSM and investigating the large $n$ asymptotic behaviour of some statistics of its induced random partition. To introduce the NB-CPSM, we consider a generic population of individuals with a random number $K$ of types, and let $K$ be distributed as a Poisson distribution with parameter $\lambda=z[1-(1-q)^{\alpha}]$ such that either $q\in(0,1)$, $\alpha\in(0,1)$ and $z>0$, or $q\in(0,1)$, $\alpha<0$ and $z<0$. For $i\in\mathbb{N}$ let $N_{i}$ be the random number of individuals of type $i$ in the population, and let the $N_{i}$'s to be independent of $K$ and independent each other, with the same distribution
\begin{equation}\label{eq:negbin}
\text{Pr}[N_{1}=x]=-\frac{1}{[1-(1-q)^{\alpha}]}{\alpha\choose x}(-q)^{x}
\end{equation}
for $x\in\mathbb{N}$. Let $S=\sum_{1\leq i\leq K}N_{i}$ and $M_{r}=\sum_{1\leq i\leq K}\mathbbm{1}_{\{N_{i}=r\}}$ for $r=1,\ldots,S$, that is $M_{r}$ is the random number of $N_{i}$'s equal to $r$ such that $\sum_{r\geq1}M_{r}=K$ and $\sum_{r\geq1}rM_{r}=S$. If $(M_{1}(\alpha,z,n),\ldots,M_{n}(\alpha,z,n))$ is a random variable whose distribution coincides with the conditional distribution of $(M_{1},\ldots,M_{S})$ given $S=n$, then it holds \citep[Section 3]{Cha(07)} that
\begin{align}\label{eq:sampling_alpha}
&\text{Pr}[(M_{1}(\alpha,z,n),\ldots,M_{n}(\alpha,z,n))=(x_{1},\ldots,x_{n})]=\frac{n!}{\sum_{j=0}^{n}\mathscr{C}(n,j;\alpha)z^{j}}\prod_{i=1}^{n}\frac{\left[z\frac{\alpha(1-\alpha)_{(i-1)}}{i!}\right]^{x_{i}}}{x_{i}!},
\end{align}
where $\mathscr{C}(n,j;\alpha)=\frac{1}{j!}\sum_{0\leq i\leq j}{j\choose i}(-1)^{i}(-i\alpha)_{(n)}$ is the generalized factorial coefficient \citep{Cha(05)}, with the proviso $\mathscr{C}(n,0,\alpha)=0$ for all $n\in\mathbb{N}$, $\mathscr{C}(n,j,\alpha)=0$ for all $j>n$ and $\mathscr{C}(0,0,\alpha)=1$. The distribution \eqref{eq:sampling_alpha} is referred to as the NB-CPSM. In particular, as $\alpha\rightarrow0$, it is easy to show that the distribution \eqref{eq:negbin} reduces to the distribution \eqref{eq:logser}. Accordingly, as $\alpha\rightarrow0$, the NB-CPSM \eqref{eq:sampling_alpha} reduces to the LS-CPSM \eqref{eq:sampling_0}. The next theorem states the large $n$ asymptotic behaviour of the counting statistics $K(\alpha,z,n)=\sum_{1\leq r\leq n}M_{r}(\alpha,z,n)$ and $M_{r}(\alpha,z,n)$ arising from the NB-CPSM.

\begin{theorem}\label{teo1} Let $P_{\lambda}$ denote a Poisson random variable with parameter $\lambda>0$. As $n\rightarrow+\infty$ it holds 
\begin{itemize}
\item[i)] for $\alpha\in(0,1)$ and $z>0$
\begin{equation}\label{cp_k_pos}
K(\alpha,z,n)\stackrel{w}{\longrightarrow} 1+P_{z}
\end{equation}
and
\begin{equation}\label{cp_m_pos}
M_{r}(\alpha,z,n)\stackrel{w}{\longrightarrow}P_{\frac{\alpha(1-\alpha)_{(r-1)}}{r!}z};
\end{equation}
\item[ii)] for $\alpha<0$ and $z<0$
\begin{equation}\label{cp_k_neg}
\frac{K(\alpha,z,n)}{n^{\frac{-\alpha}{1-\alpha}}}\stackrel{w}{\longrightarrow} \frac{(\alpha z)^{\frac{1}{1-\alpha}}}{-\alpha}
\end{equation}
and
\begin{equation}\label{cp_m_neg}
M_{r}(\alpha,z,n)\stackrel{w}{\longrightarrow} P_{\frac{\alpha(1-\alpha)_{(r-1)}}{r!}z}.
\end{equation}
\end{itemize}
\end{theorem}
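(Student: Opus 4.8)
The plan is to reduce every one of the four limits to the coefficient asymptotics of a single generating function, working directly from the sampling formula \eqref{eq:sampling_alpha} so that the auxiliary parameter $q$ never enters. Write $c_r:=\frac{\alpha(1-\alpha)_{(r-1)}}{r!}$ and let $B_n(\alpha,z):=\sum_{j=0}^n\mathscr{C}(n,j;\alpha)z^j$ be the normalizing constant in \eqref{eq:sampling_alpha}. First I would record the exponential generating function of the generalized factorial coefficients, $\sum_{n\ge0}\mathscr{C}(n,j;\alpha)\frac{t^n}{n!}=\frac{1}{j!}(1-(1-t)^\alpha)^j$, which after summation against $z^j$ gives
\[
\sum_{n\ge0}B_n(\alpha,z)\frac{t^n}{n!}=\exp\big(z(1-(1-t)^\alpha)\big)=e^{z}\exp\big(-z(1-t)^\alpha\big).
\]
Setting $a_n(w):=[t^n]\exp(-w(1-t)^\alpha)$, so that $B_n(\alpha,z)=e^{z}\,n!\,a_n(z)$, the factorization $v^{\sum_i x_i}=\prod_i v^{x_i}$ turns summing \eqref{eq:sampling_alpha} against $v^{K}$ into the scaling $z\mapsto vz$, giving $\E[v^{K(\alpha,z,n)}]=B_n(\alpha,vz)/B_n(\alpha,z)=e^{(v-1)z}a_n(vz)/a_n(z)$, while retaining only the exponent $x_r$ gives
\[
\E\big[v^{M_r(\alpha,z,n)}\big]=\frac{1}{a_n(z)}\sum_{m\ge0}\frac{\big((v-1)zc_r\big)^m}{m!}\,a_{n-rm}(z).
\]
Everything then rests on the asymptotics of $a_n(w)$ and of the ratios $a_n(vz)/a_n(z)$ and $a_{n-rm}(z)/a_n(z)$.

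For part (i), where $\alpha\in(0,1)$ and $z>0$, the series $\exp(-w(1-t)^\alpha)=\sum_{k\ge0}\frac{(-w)^k}{k!}(1-t)^{\alpha k}$ has its dominant singularity at $t=1$, and the $k=1$ term $(1-t)^\alpha$ is the least smooth. Singularity analysis (the transfer theorem of Flajolet--Odlyzko) then gives $a_n(w)\sim -w\,[t^n](1-t)^\alpha\sim -w\,n^{-\alpha-1}/\Gamma(-\alpha)$, the higher-order terms $k\ge2$ being negligible. Hence $a_n(vz)/a_n(z)\to v$, so $\E[v^{K(\alpha,z,n)}]\to v\,e^{z(v-1)}$, which is the probability generating function of $1+P_z$ and yields \eqref{cp_k_pos}; and $a_{n-rm}(z)/a_n(z)\to1$ for each fixed $m$, so $\E[v^{M_r(\alpha,z,n)}]\to\sum_{m\ge0}\frac{((v-1)zc_r)^m}{m!}=e^{(v-1)zc_r}$, the probability generating function of a Poisson variable of mean $zc_r$, which yields \eqref{cp_m_pos}.

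For part (ii), where $\alpha<0$ and $z<0$, the reduction is identical but now $\exp(-w(1-t)^\alpha)=\exp\big((-w)(1-t)^{-|\alpha|}\big)$ has an essential singularity at $t=1$, so I would replace the transfer theorem by a saddle-point estimate of the Cauchy integral for $a_n(w)$. Locating the saddle $t_\ast\in(0,1)$, for which $1-t_\ast$ is of order $(|\alpha|(-w)/n)^{1/(1-\alpha)}$, and evaluating there should give
\[
\log a_n(w)\sim(1-\alpha)\,(-\alpha)^{\alpha/(1-\alpha)}\,(-w)^{1/(1-\alpha)}\,n^{-\alpha/(1-\alpha)}.
\]
Here $zc_r>0$, and since the coefficients $a_k(z)$ are positive and eventually nondecreasing the ratios $a_{n-rm}(z)/a_n(z)$ stay bounded; dominated convergence together with $a_{n-rm}(z)/a_n(z)\to1$ then gives the Poisson limit \eqref{cp_m_neg} of mean $zc_r$. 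For the block count, which now grows, I would extract the rate through the Laplace transform: taking $v=e^{-s/n^{\gamma}}$ with $\gamma=-\alpha/(1-\alpha)$ and inserting the saddle-point asymptotics into $\log\E[e^{-sK(\alpha,z,n)/n^\gamma}]=(v-1)z+\log a_n(vz)-\log a_n(z)$, the identity $(-vz)^{1/(1-\alpha)}=(-z)^{1/(1-\alpha)}e^{-s/((1-\alpha)n^\gamma)}$ produces
\[
\log\E\big[e^{-sK(\alpha,z,n)/n^\gamma}\big]\longrightarrow-s\,(-\alpha)^{\alpha/(1-\alpha)}(-z)^{1/(1-\alpha)}=-s\,\frac{(\alpha z)^{1/(1-\alpha)}}{-\alpha}.
\]
By the continuity theorem for Laplace transforms of nonnegative variables this is exactly the degenerate limit \eqref{cp_k_neg}.

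The two generating-function identities and the algebra matching the constants are routine. The hard part will be the saddle-point analysis in part (ii): I must establish the asymptotics of $\log a_n(w)$ with enough uniformity in $w$ near the fixed value $z$, since the Laplace-transform step evaluates $a_n$ at $w=vz$ with $v=e^{-s/n^\gamma}\to1$, and I must control the error so that the $o(1)$ in $\log a_n(w)$ survives multiplication by $n^\gamma$. I would make precise the Hayman-admissibility of $\exp((-w)(1-t)^{-|\alpha|})$, or, as an alternative, invoke the known large-$n$ asymptotics of the generalized factorial coefficients $\mathscr{C}(n,j;\alpha)$ to read off the behaviour of $B_n(\alpha,\cdot)$ directly. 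The remaining subtlety is the interchange of limit and sum in the $M_r$ formula: in part (i) one splits off the tail with $m$ comparable to $n$, where the super-exponential factor $1/m!$ dominates the at-most-polynomial growth of $a_{n-rm}(z)/a_n(z)$ coming from $a_k(z)\sim -z\,k^{-\alpha-1}/\Gamma(-\alpha)$, while in part (ii) monotonicity of $a_k(z)$ bounds the ratio by $1$.
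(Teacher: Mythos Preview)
Your approach is correct and genuinely different from the paper's. You unify all four limits through the coefficient asymptotics of the single function $\exp(-w(1-t)^{\alpha})$, using Flajolet--Odlyzko transfer for $\alpha\in(0,1)$ and a saddle-point/Hayman argument for $\alpha<0$, and you work with the probability generating functions of both $K$ and $M_r$. The paper instead treats the four cases with distinct tools: for $K$ in part (i) it rewrites $\sum_j\mathscr{C}(n,j;\alpha)z^j$ via the incomplete Gamma function and the reflection formula, bounding term by term; for $K$ in part (ii) it recognises the normalising sum as a compound Poisson--Gamma moment $\E[(G_{\sigma P_w,1})^n]$, expresses it through the Wright function $W_{\sigma,0}$, and invokes Wright's and Berg's classical integral asymptotics; and for $M_r$ in both regimes it computes \emph{falling factorial moments} rather than the PGF. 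Your generating-function route is more systematic and makes the dependence on $z$ transparent (the scaling $z\mapsto vz$ is exactly what is needed for the PGF of $K$), while the paper's route avoids singularity analysis entirely and trades it for identities from special functions; in particular, its use of factorial moments for $M_r$ gives simply $\E[(M_r)_{[s]}]=(c_rz)^s\,a_{n-rs}(z)/a_n(z)$, so only the single ratio $a_{n-rs}/a_n\to1$ is needed and no summability argument is required. You may find it cleaner to adopt factorial moments for $M_r$ in part (ii), since your PGF argument there hinges on the eventual monotonicity of $a_k(z)$, which, while true, needs the saddle-point expansion to one order beyond the leading term (your first-order statement $\log a_n(w)\sim D\,n^{-\alpha/(1-\alpha)}$ alone does not give $a_{k+1}\ge a_k$); switching to factorial moments bypasses this entirely. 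Finally, your acknowledged uniformity requirement in $w$ for the saddle-point is exactly what the paper also confronts---it notes that its asymptotic \eqref{BnAsymptotic} holds uniformly on compacta before substituting $w=s_n\zeta$ with $s_n\to1$---so either route has to pay this price.
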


\begin{proof}
As regard the proof of the asymptotic behaviour in \eqref{cp_k_pos}, we start by recalling that the probability generating function $G(\cdot;\lambda)$ of $P_{\lambda}$ is $G(s;\lambda)=\exp\{-\lambda(s-1)\}$ for any $s>0$. Now, let $G(\cdot; \alpha,z,n)$ be the probability generating function of $K(\alpha,z,n)$. The distribution of $K(\alpha,z,n)$ follows by combining the NB-CPSM \eqref{eq:sampling_alpha} with Theorem 2.15 of  \citet{Cha(05)}. In particular, it follows that
\begin{displaymath}
G(s; \alpha,z,n)=\frac{\sum_{j=1}^{n}\mathscr{C}(n,j;\alpha)(sz)^{j}}{\sum_{j=1}^{n}\mathscr{C}(n,j;\alpha)z^{j}}.
\end{displaymath}
Hereafter, we show that $G(s; \alpha,z,n) \rightarrow s\exp\{z(s-1)\}$ as $n\rightarrow+\infty$, for any $s > 0$, which implies \eqref{cp_k_pos}. In particular, by a direct application of the definition of $\mathscr{C}(n,k;\alpha)$ we write the following identities 
\begin{align*}
\sum_{j=1}^n \mathscr{C}(n,j;\alpha) z^j&= \sum_{i=1}^n (-1)^i (-i\alpha)_{(n)} \sum_{k=i}^n \frac{1}{k!} {k\choose i} z^k= \sum_{i=1}^n (-1)^i (-i\alpha)_{(n)} e^z z^i \frac{\Gamma(n-i+1,z)}{i!\Gamma(n-i+1)},
\end{align*}
where $\Gamma(a,x) := \int_x^{+\infty} t^{a-1} e^{-t} \text{d} t$ denotes the incomplete gamma function for $a,x > 0$ and $\Gamma(a) := \int_{0}^{+\infty} t^{a-1} e^{-t} \text{d} t$ denotes the Gamma function for $a>0$. Accordingly, we can write the following identity
\begin{displaymath}
G(s; \alpha,z,n) = e^{z(s-1)} \frac{-zs \frac{\Gamma(n,zs)}{\Gamma(n)} + \sum_{i=2}^n (-1)^i \frac{(-i\alpha)_{(n)}}{(-\alpha)_{(n)}} (zs)^i \frac{\Gamma(n-i+1,zs)}{i!\Gamma(n-i+1)}}
{-z \frac{\Gamma(n,z)}{\Gamma(n)} + \sum_{i=2}^{n} (-1)^i \frac{(-i\alpha)_{(n)}}{(-\alpha)_{(n)}} z^i \frac{\Gamma(n-i+1,z)}{i!\Gamma(n-i+1)}}\ .
\end{displaymath}
Now, since $\lim_{n \rightarrow +\infty} \frac{\Gamma(n,x)}{\Gamma(n)} = 1$ for any $x > 0$, the proof \eqref{cp_k_pos} is completed by showing that, for any $t > 0$,
\begin{equation}\label{main_to_prove}
\lim_{n \rightarrow +\infty} \sum_{i=2}^n (-1)^i \frac{(-i\alpha)_{(n)}}{(-\alpha)_{(n)}} \frac{\Gamma(n-i+1,t)}{\Gamma(n-i+1)} \frac{t^i}{i!} = 0.
\end{equation}
By the definition of ascending factorials and the reflection formula of the Gamma function, it holds true
\begin{displaymath}
\frac{(-i\alpha)_{(n)}}{(-\alpha)_{(n)}} = \frac{\Gamma(n-i\alpha)}{\Gamma(n-\alpha)} \frac{\sin i\pi\alpha}{\pi} \Gamma(i\alpha + 1) \Gamma(-\alpha). 
\end{displaymath}
In particular, by means of the monotonicity of the function $[1,+\infty) \ni z \mapsto \Gamma(z)$, we can write the identity
\begin{equation}\label{first_bound}
\frac{1}{i!} \Big{|}\frac{(-i\alpha)_{(n)}}{(-\alpha)_{(n)}}\Big{|} \leq \frac{|\Gamma(-\alpha)|}{\pi} \frac{\Gamma(n-2\alpha)}{\Gamma(n-\alpha)}\frac{\Gamma(i\alpha + 1)}{i!}  
\end{equation}
for any $n \in \mathbb{N}$ such that $n > 1/(1-\alpha)$, and $i \in \{2, \dots, n\}$. Note that $\frac{\Gamma(n,x)}{\Gamma(n)} \leq 1$. 
Then we apply \eqref{first_bound} to get
\begin{align*}
\Big{|} \sum_{i=2}^n (-1)^i \frac{(-i\alpha)_{(n)}}{(-\alpha)_{(n)}} \frac{\Gamma(n-i+1,t)}{\Gamma(n-i+1)} \frac{t^i}{i!}\Big{|}&\leq \sum_{i=2}^n \frac{t^i}{i!} 
\Big{|}\frac{(-i\alpha)_{(n)}}{(-\alpha)_{(n)}}\Big{|}\\
& \leq \frac{|\Gamma(-\alpha)|}{\pi} \frac{\Gamma(n-2\alpha)}{\Gamma(n-\alpha)} \sum_{i\geq0} t^i \frac{\Gamma(i\alpha + 1)}{i!}.
\end{align*}
Now, by means of Stirling approximation it holds $\frac{\Gamma(n-2\alpha)}{\Gamma(n-\alpha)} \sim \frac{1}{n^{\alpha}}$ as $n\rightarrow+\infty$. Moreover, we have that
\begin{displaymath}
\sum_{i\geq0} t^i \frac{\Gamma(i\alpha + 1)}{i!} = \int_0^{+\infty} e^{tz^{\alpha} - z} \text{d} z < +\infty
\end{displaymath}
where the finiteness of the integral follows, for any fixed $t > 0$, from the fact that $tz^{\alpha} < \frac 12 z$ if $z > (2t)^{\frac{1}{1-\alpha}}$. This completes the proof of \eqref{main_to_prove}, and hence the proof of \eqref{cp_k_pos}. As regard the proof of \eqref{cp_m_pos}, we make use of the falling factorial moments of $M_{r}(\alpha,z,n)$, which follows by combining the NB-CPSM \eqref{eq:sampling_alpha} with Theorem 2.15 of \citet{Cha(05)}. Let $(a)_{[n]}$ be the falling factorial of $a$ of order $n$, i.e. $(a)_{[n]}=\prod_{0\leq i\leq n-1}(a-i)$, for any $a\in\mathbb{R}^{+}$ and $n\in\mathbb{N}_{0}$ with the proviso $(a)_{[0]}=1$. Then, we write
\begin{align*}
&\mathbb{E}[(M_{r}(\alpha,z,n))_{[s]}]\\
&\quad=(-1)^{rs}(n)_{[rs]}{\alpha\choose r}^{s}(-z)^{s}\frac{\sum_{j=0}^{n-rs}\mathscr{C}(n-rs,j;\alpha)z^{j}}{\sum_{j=0}^{n}\mathscr{C}(n,j;\alpha)z^{j}}\\
&\quad=(-1)^{rs}(n)_{[rs]}{\alpha\choose r}^{s}(-z)^{s}\frac{(-z)\frac{\Gamma(n-rs,z)}{\Gamma(n-rs)}+\sum_{i=2}^{n-rs}(-1)^{i}\frac{(-i\alpha)_{(n-rs)}}{(-\alpha)_{(n-rs)}}(z)^{i}\frac{\Gamma(n-rs-i+1,z)}{i!\Gamma(n-rs-i+1)}}{(-z)\frac{\Gamma(n,z)}{\Gamma(n)}+\sum_{i=2}^{n}(-1)^{i}\frac{(-i\alpha)_{(n)}}{(-\alpha)_{(n)}}(z)^{i}\frac{\Gamma(n-i+1,z)}{\Gamma(n-i+1)}}\\
&\quad=(-1)^{rs}(n)_{[rs]}{\alpha\choose r}^{s}(-z)^{s}\\
&\quad\quad\times\frac{(-\alpha)_{(n-rs)}}{(-\alpha)_{(n)}}\frac{(-z)\frac{\Gamma(n-rs,z)}{\Gamma(n-rs)}+\sum_{i=2}^{n-rs}(-1)^{i}\frac{(-i\alpha)_{(n-rs)}}{(-\alpha)_{(n-lr)}}(z)^{i}\frac{\Gamma(n-rs-i+1,z)}{i!\Gamma(n-rs-i+1)}}{(-z)\frac{\Gamma(n,z)}{\Gamma(n)}+\sum_{i=2}^{n}(-1)^{i}\frac{(-i\alpha)_{(n)}}{(-\alpha)_{(n)}}(z)^{i}\frac{\Gamma(n-i+1,z)}{\Gamma(n-i+1)}}.
\end{align*}
Now, by means of the same argument applied in the proof of the statement in \eqref{cp_k_pos}, it holds true that
\begin{displaymath}
\lim_{n\rightarrow+\infty}\frac{(-z)\frac{\Gamma(n-rs,z)}{\Gamma(n-rs)}+\sum_{i=2}^{n-rs}(-1)^{i}\frac{(-i\alpha)_{(n-rs)}}{(-\alpha)_{(n-lr)}}(z)^{i}\frac{\Gamma(n-rs-i+1,z)}{i!\Gamma(n-rs-i+1)}}{(-z)\frac{\Gamma(n,z)}{\Gamma(n)}+\sum_{i=2}^{n}(-1)^{i}\frac{(-i\alpha)_{(n)}}{(-\alpha)_{(n)}}(z)^{i}\frac{\Gamma(n-i+1,z)}{\Gamma(n-i+1)}}=1.
\end{displaymath}
Then,
\begin{displaymath}
\lim_{n\rightarrow+\infty}\mathbb{E}[(M_{r}(\alpha,z,n))_{[s]}]=(-1)^{rs}{\alpha\choose r}^{s}(-z)^{s}=\left[\frac{\alpha(1-\alpha)_{(r-1)}}{r!}z\right]^{s}
\end{displaymath}
follows from the fact that $(n)_{[rs]}\sim\frac{(-\alpha)_{(n-rs)}}{(-\alpha)_{(n)}}$ as $n\rightarrow+\infty$. Finally, the proof of the large $n$ asymptotics \eqref{cp_m_pos} is completed by recalling that falling factorial moment of order $s$ of $P_{\lambda}$ is $\mathbb{E}[(P_{\lambda})_{[s]}]=\lambda^{s}$. 

As regard the proof of the asymptotic behaviour in \eqref{cp_k_neg}, let  $\alpha = -\sigma$ for any $\sigma > 0$ and let $z = -\zeta$ for any $\zeta > 0$. Then, by a direct application of Equation 2.27 of \citet{Cha(05)}, we write the identity
\begin{displaymath}
\sum_{j=0}^{n}\mathscr{C}(n,j;-\sigma)(-\zeta)^{j}=(-1)^{n}\sum_{v=0}^{n}s(n,v)(-\sigma)^{v}\sum_{j=0}^{v}\zeta^{j}S(v,j),
\end{displaymath}
where $S(v,j)$ is the Stirling number of that second type. Now, note that $\sum_{0\leq j\leq v}^{v}\zeta^{j}S(v,j)$ is the moment of order $v$ of a Poisson random variable with parameter $\zeta>0$. Then, we write the following identities 
\begin{align}\label{exp_gen_fact}
\sum_{j=0}^{n}\mathscr{C}(n,j;-\sigma)(-\zeta)^{j}&=\sum_{v=0}^{n}|s(n,v)|\sigma^{v}\sum_{j\geq0}j^{v}\text{e}^{-\zeta}\frac{\zeta^{j}}{j!}=\sum_{j\geq0}\text{e}^{-\zeta}\frac{\zeta^{j}}{j!}\int_{0}^{+\infty}x^{n}f_{G_{\sigma j,1}}(x)\text{d} x.
\end{align}
That is, 
\begin{equation} \label{bernardoNEW}
B_n(w) = \mathbb{E}[(G_{\sigma P_{w},1})^n],
\end{equation}
where $G_{a,1}$ and $P_w$ denote two independent random variables such that $G_{a,1}$ is a Gamma random variable with shape parameter $a > 0$ and scale parameter $1$, and $P_w$ is a Poisson random variable with parameter $w$. Accordingly, the distribution of the random variable $G_{\sigma P_{w},1}$, say $\mu_{\sigma,w}$ is the following 
\begin{displaymath}
\mu_{\sigma,w}(\text{d} t) = e^{-w} \delta_0(\text{d} t) + \left(\sum_{j\geq1} \frac{e^{-w} w^j}{j!} \frac{1}{\Gamma(j\sigma)} e^{-t} t^{j\sigma - 1} \right) \text{d} t
\end{displaymath} 
for $t>0$. The discrete component of the distribution $\mu_{\sigma,w}$ does not contribute in the expectation \eqref{bernardoNEW}, so that we focus on the absolutely continuous component, whose density can be written as follows
\begin{displaymath} 
\sum_{j\geq1}\frac{e^{-w} w^j}{j!} \frac{1}{\Gamma(j\sigma)} e^{-t} t^{j\sigma - 1} = \frac{e^{-(w+t)}}{t} W_{\sigma, 0}(wt^{\sigma}), 
\end{displaymath}
where $W_{\sigma, \tau}(y) := \sum_{j\geq0} \frac{y^j}{j! \Gamma(j\sigma + \tau)}$ is the Wright function \citep{Wri(35)}. In particular, for $\tau = 0$ it holds
\begin{equation} \label{BnWright}
B_n(w) = \int_0^{+\infty} t^n \frac{e^{-(w+t)}}{t} W_{\sigma, 0}(wt^{\sigma}) \text{d} t\ . 
\end{equation}
If we split the integral as $\int_0^M + \int_M^{+\infty}$ for any $M>0$, the contribution of the latter integral is overwhelming with respect to the contribution of the former. Then $W_{\sigma, 0}$ can be equivalently replaced by the asymptotics $W_{\sigma, 0}(y) \sim c(\sigma) y^{\frac{1}{2(1+\sigma)}} \exp\{ \sigma^{-1}(\sigma+1) (\sigma y)^{\frac{1}{1+\sigma}}\}$, as $ y\rightarrow+\infty$, for some constant $c(\sigma)$ depending solely on $\sigma$. See Theorem 2 in \citet{Wri(35)}. Hence, we can write the identity
\begin{align*}
B_n(w) &\sim c(\sigma) \int_0^{+\infty} t^{n-1} e^{-(w+t)} (wt^{\sigma})^{\frac{1}{2(1+\sigma)}} \exp\left\{ \frac{\sigma+1}{\sigma} (\sigma wt^{\sigma})^{\frac{1}{1+\sigma}} \right\} \text{d} t \nonumber \\
&= c(\sigma) e^{-w} w^{\frac{1}{2(1+\sigma)}}  \int_0^{+\infty} t^{n + \frac{\sigma}{2(1+\sigma)} -1} \exp\{ A(w,\sigma) t^{\frac{\sigma}{1+\sigma}} - t \} \text{d} t,
\end{align*}
where $A(w,\sigma) := \frac{\sigma+1}{\sigma} (\sigma w)^{\frac{1}{1+\sigma}}$. Then, the problem is reduced to an integral whose asymptotic behaviour is described in \citet{Ber(58)}. From Equation 31 of \citet{Ber(58)} and Stirling approximation, we have
\begin{equation} \label{BnAsymptotic}
B_n(w) \sim c(\sigma) e^{-w} w^{\frac{1}{2(1+\sigma)}} \Gamma(n)\exp\left\{ A(w,\sigma) n^{\frac{\sigma}{1+\sigma}} \right\} \ . 
\end{equation}
In particular, observe that such a last asymptotic expansion leads directly to \eqref{cp_k_neg}. Indeed let $G(\cdot; -\sigma,-\zeta,n)$ be the probability generating function of the random variable $K(-\sigma,-\zeta,n)$, which reads as $G(s; -\sigma,-\zeta,n)= B_n(s\zeta)/B_n(\zeta)$ for $s>0$. Then, by means of \eqref{BnAsymptotic}, for any fixed $s>0$ we write
\begin{equation} \label{GnAsymptotic}
G(s; -\sigma,-\zeta,n) \sim e^{-w(s-1)} s^{\frac{1}{2(1+\sigma)}} \exp\left\{ n^{\frac{\sigma}{1+\sigma}} \frac{\sigma+1}{\sigma} (\sigma \zeta)^{\frac{1}{1+\sigma}} [s^{\frac{1}{1+\sigma}} - 1]\right\} \ .
\end{equation}
Note that \eqref{BnAsymptotic} holds uniformly in $w$ in a compact set. Accordingly, we consider the function $G(s; -\sigma,-\zeta,n)$ evaluated at some point $s_n$ and extend the validity of \eqref{GnAsymptotic} with $s_n$ in the place of $s$, as long as $\{s_n\}_{n\geq 1}$ varies in a compact subset of $[0,+\infty)$. Thus, we can choose $s_n = s^{\beta(n)}$ and $\beta(n) = \frac{1}{n^{\frac{\sigma}{1+\sigma}}}$ and notice that $\beta(n) \rightarrow 0$ as $n\rightarrow+\infty$. Thus, $s_n \simeq 1 + \beta(n)\log s \rightarrow 1$ and we have that
\begin{displaymath}
n^{\frac{\sigma}{1+\sigma}} \frac{\sigma+1}{\sigma} (\sigma w)^{\frac{1}{1+\sigma}} [s_n^{\frac{1}{1+\sigma}} - 1] \rightarrow \frac{(\sigma \zeta)^{\frac{1}{1+\sigma}}}{\sigma} \log s, 
\end{displaymath}
which implies that $K(-\sigma,-\zeta,n)\rightarrow\frac{(\sigma \zeta)^{\frac{1}{1+\sigma}}}{\sigma}$ as $n\rightarrow+\infty$. This completes the proof of \eqref{cp_k_neg}. As regard the proof \eqref{cp_m_neg}, let  $\alpha = -\sigma$ for any $\sigma > 0$ and let $z = -\zeta$ for any $\zeta > 0$. Similarly to the proof of \eqref{cp_m_pos}, here we make use of the falling factorial moments of $M_{r}(-\sigma,-\zeta,n)$. In particular, we can write
\begin{align*}
&\mathbb{E}[(M_{r}(-\sigma,\zeta,n))_{[s]}]=(-1)^{rs}(n)_{[rs]}{-\sigma\choose r}^{s}\zeta^{s}\frac{\sum_{j=0}^{n-rs}\mathscr{C}(n-rs,j;-\sigma)(-\zeta)^{j}}{\sum_{j=0}^{n}\mathscr{C}(n,j;-\sigma)(-\zeta)^{j}}.
\end{align*}
At this point, we can make use of the same large $n$ asymptotic arguments applied in the proof of statement \eqref{cp_m_pos}. In particular, by means of the large $n$ asymptotic \eqref{BnAsymptotic}, as $n\rightarrow+\infty$, it holds true that
\begin{displaymath}
\frac{\sum_{j=0}^{n-rs}\mathscr{C}(n-rs,j;-\sigma)(-\zeta)^{j}}{\sum_{j=0}^{n}\mathscr{C}(n,j;-\sigma)(-\zeta)^{j}}\sim n^{-rs}.
\end{displaymath}
Then,
\begin{displaymath}
\lim_{n\rightarrow+\infty}\mathbb{E}[(M_{r}(-\sigma,-\zeta,n))_{[s]}]=(-1)^{rs}{-\sigma\choose r}^{s}\zeta^{s}=\left[\frac{-\sigma(1+\sigma)_{(r-1)}}{r!}(-\zeta)\right]^{s}
\end{displaymath}
follows from the fact that $(n)_{[rs]}\sim n^{rs}$ as $n\rightarrow+\infty$. Finally, the proof of the large $n$ asymptotic behaviour in \eqref{cp_m_neg} is completed by recalling that falling factorial moment of order $s$ of $P_{\lambda}$ is $\mathbb{E}[(P_{\lambda})_{[s]}]=\lambda^{s}$. 
\end{proof}

In the rest of the present section, we make use of the NB-CPSM displayed in \eqref{eq:sampling_alpha} to introduce a compound Poisson perspective of the EP-SM. In particular, our main result extends the well-known compound Poisson perspective of the E-SM to the EP-SM for either $\alpha\in(0,1)$, or $\alpha<0$. For $\alpha\in(0,1)$ let $f_{\alpha}$ denote the density function of a positive $\alpha$-stable random variable $X_{\alpha}$, that is $X_{\alpha}$ is a random variable for which the moment generating function is $\mathbb{E}[\exp\{-tX_{\alpha}\}]=\exp\{-t^{\alpha}\}$ for any $t>0$. For $\alpha\in(0,1)$ and $\theta>-\alpha$ let  $S_{\alpha,\theta}$ be a positive random variable with density function
\begin{displaymath}
f_{S_{\alpha,\theta}}(s)=\frac{\Gamma(\theta+1)}{\alpha\Gamma(\theta/\alpha+1)}s^{\frac{\theta-1}{\alpha}-1}f_{\alpha}(s^{-\frac{1}{\alpha}}).
\end{displaymath}
That is, the random variable $S_{\alpha,\theta}$ is a scaled Mittag-Leffler random variable \citep[Chapter 1]{Pit(06)}. Now, let $G_{a,b}$ be a Gamma random variable with scale parameter $b>0$ and shape parameter $a>0$, and let assume that $G_{a,b}$ is independent of $S_{\alpha,\theta}$. Then, for $\alpha\in(0,1)$, $\theta>-\alpha$ and $n\in\mathbb{N}$ we define
\begin{equation}\label{distr_lat_1}
\bar{X}_{\alpha,\theta,n}\stackrel{d}{=}G^{\alpha}_{\theta+n,1}S_{\alpha,\theta}.
\end{equation}
Finally, for $\alpha<0$, $z<0$ and $n\in\mathbb{N}$ let $\tilde{X}_{\alpha,z,n}$ be a random variable on $\mathbb{N}$ whose distribution is a tilted Poisson distribution arising from the identity \eqref{exp_gen_fact}. Precisely, for any $x\in\mathbb{N}$ the distribution of $\tilde{X}_{\alpha,z,n}$ is
\begin{equation}\label{distr_lat_2}
\text{Pr}[\tilde{X}_{\alpha,z,n}=x]=\frac{1}{\sum_{j=1}^{n}\mathscr{C}(n,j;\alpha)z^{j}}\frac{\text{e}^{z}(-z)^{x}\Gamma(-x\alpha+n)}{x!\Gamma(-x\alpha)}.
\end{equation}
In the next theorem, we make use of the random variables $\bar{X}_{\alpha,\theta,n}$ and $\tilde{X}_{\alpha,z,n}$ to set an interplay between the NB-CPSM \eqref{eq:sampling_alpha} and the EP-SM \eqref{eq_ewe_mod}. This extends the compound Poisson perspective of the E-SM.

\begin{theorem}\label{teo3}
Let $(M_{1,n}(\alpha,\theta),\ldots,M_{n,n}(\alpha,\theta))$ be distributed as the EP-SM \eqref{eq_ewe_mod} and let $\bar{X}_{\alpha,\theta,n}$ be the random variable defined in \eqref{distr_lat_1}, which is independent of $(M_{1,n}(\alpha,\theta),\ldots,M_{n,n}(\alpha,\theta))$. Moreover, let $(M_{1}(\alpha,z,n),\ldots,M_{n}(\alpha,z,n))$ be distributed as the NB-CPSM \eqref{eq:sampling_alpha}, and let $\tilde{X}_{\alpha,z,n}$ be the random variable defined in \eqref{distr_lat_2}, which is independent of $(M_{1}(\alpha,z,n),\ldots,M_{n}(\alpha,z,n))$. Then, it holds true that
\begin{itemize}
\item[i)] for $\alpha\in(0,1)$ and $\theta>-\alpha$
\begin{displaymath}
(M_{1,n}(\alpha,\theta),\ldots,M_{n,n}(\alpha,\theta))\stackrel{d}{=}(M_{1}(\alpha,\bar{X}_{\alpha,\theta,n},n),\ldots,M_{n}(\alpha,\bar{X}_{\alpha,\theta,n},n));
\end{displaymath}
\item[ii)] for $\alpha<0$ and $z<0$
\begin{displaymath}
(M_{1}(\alpha,z,n),\ldots,M_{n}(\alpha,z,n))\stackrel{d}{=}(M_{1,n}(\alpha,-\tilde{X}_{\alpha,z,n}\alpha),\ldots,M_{n,n}(\alpha,-\tilde{X}_{\alpha,z,n}\alpha)).
\end{displaymath}
\end{itemize}
\end{theorem}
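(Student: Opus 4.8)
The plan is to exploit the common algebraic shape of the two models. Both \eqref{eq_ewe_mod} and \eqref{eq:sampling_alpha} factor as $n!\prod_{i=1}^{n}(w_{i})^{x_{i}}/x_{i}!$ with $w_{i}:=\alpha(1-\alpha)_{(i-1)}/i!$, multiplied by a prefactor that depends on $(x_{1},\dots,x_{n})$ only through $k:=\sum_{i=1}^{n}x_{i}$: for the EP-SM this prefactor is $(\theta/\alpha)_{(k)}/(\theta)_{(n)}$, whereas for the NB-CPSM it is $z^{k}/C_{n}(z)$, where $C_{n}(z):=\sum_{j=0}^{n}\mathscr{C}(n,j;\alpha)z^{j}$ is the NB-CPSM normalizer. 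Since randomizing $z$ in part i), respectively $m=-\theta/\alpha$ in part ii), acts only on the $k$-dependent prefactor, each distributional identity is equivalent to matching the randomized prefactor with the target prefactor for every admissible $k\in\{1,\dots,n\}$. This reduces each statement to a single family of scalar identities.

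For part i) I would prove, for $k=1,\dots,n$, that $\int_{0}^{+\infty}\frac{z^{k}}{C_{n}(z)}f_{\bar X_{\alpha,\theta,n}}(z)\,\mathrm{d}z=(\theta/\alpha)_{(k)}/(\theta)_{(n)}$. The cleanest route is to establish the stronger pointwise density identity
\[
f_{\bar X_{\alpha,\theta,n}}(z)=\frac{C_{n}(z)\,z^{\theta/\alpha-1}e^{-z}}{(\theta)_{(n)}\,\Gamma(\theta/\alpha)},\qquad z>0,
\]
because dividing by $C_{n}(z)$ turns $f_{\bar X_{\alpha,\theta,n}}/C_{n}$ into $(\theta)_{(n)}^{-1}$ times a $\mathrm{Gamma}(\theta/\alpha,1)$ density, whose $k$-th moment is exactly $(\theta/\alpha)_{(k)}/(\theta)_{(n)}$. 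I would prove this density identity through Mellin transforms. Observing that $S_{\alpha,\theta}$ is the $(\theta/\alpha)$-polynomially tilted law of the Mittag-Leffler variable $X_{\alpha}^{-\alpha}$, and using the negative stable moments $\mathbb{E}[X_{\alpha}^{-s}]=\Gamma(1+s/\alpha)/\Gamma(1+s)$ together with $\mathbb{E}[G_{\theta+n,1}^{\alpha p}]=\Gamma(\theta+n+\alpha p)/\Gamma(\theta+n)$, one computes $\mathbb{E}[\bar X_{\alpha,\theta,n}^{\,p}]$ in closed form; termwise Gamma integration gives the $p$-th moment of the right-hand side as a constant times $\sum_{k}\mathscr{C}(n,k;\alpha)\Gamma(p+\theta/\alpha+k)$. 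After simplification the two Mellin transforms coincide precisely when $\sum_{k}\mathscr{C}(n,k;\alpha)(u)_{(k)}=(\alpha u)_{(n)}$ with $u=p+\theta/\alpha$, which is the defining expansion of the generalized factorial coefficients and holds as a polynomial identity in $u$. Equality of the Mellin transforms on a vertical strip then upgrades to the density identity by Mellin inversion.

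For part ii), writing $\alpha=-\sigma$ and $z=-\zeta$ and recalling $C_{n}(-\zeta)=B_{n}(\zeta)$ from \eqref{bernardoNEW}, the reduction is to show that $\sum_{m\geq1}\mathrm{Pr}[\tilde X_{\alpha,z,n}=m]\,(-m)_{(k)}/(m\sigma)_{(n)}=(-\zeta)^{k}/B_{n}(\zeta)$ for $k=1,\dots,n$. Substituting the tilted Poisson law \eqref{distr_lat_2} and using $(m\sigma)_{(n)}=\Gamma(m\sigma+n)/\Gamma(m\sigma)$, the factor $(m\sigma)_{(n)}$ cancels and the left-hand side collapses to $e^{-\zeta}B_{n}(\zeta)^{-1}\sum_{m}(\zeta^{m}/m!)(-m)_{(k)}$. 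Since $(-m)_{(k)}=(-1)^{k}(m)_{[k]}$, this equals $(-1)^{k}B_{n}(\zeta)^{-1}$ times the $k$-th falling factorial moment of a Poisson$(\zeta)$ variable, which is $\zeta^{k}$; hence the target $(-\zeta)^{k}/B_{n}(\zeta)$. This part only invokes the elementary Poisson factorial-moment identity already used in the proof of Theorem~\ref{teo1}.

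The main obstacle is part i), and within it the density identity. Everything hinges on correctly computing the Mellin transform of the product $G_{\theta+n,1}^{\alpha}S_{\alpha,\theta}$ — in particular the negative-moment and tilting bookkeeping for the stable factor — and on recognizing the resulting sum as the generalized factorial coefficient expansion; one must also verify the strip of convergence so that agreement of the Mellin transforms legitimately yields the pointwise density identity. Part ii) is comparatively routine.
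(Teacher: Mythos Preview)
Your reduction of both statements to matching $k$-dependent prefactors is exactly right, and for part ii) your computation is essentially the paper's: in the paper the identity \eqref{gamma_id_2} is proved by the same cancellation of $\Gamma(-m\alpha+n)/\Gamma(-m\alpha)$ against $(m\sigma)_{(n)}$, leaving the Poisson sum $\sum_{m\ge k}(-z)^{m}/(m-k)!=e^{-z}(-z)^{k}$.

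For part i) you take a genuinely different route. Both approaches establish (yours explicitly, the paper's implicitly) the density identity
\[
f_{\bar X_{\alpha,\theta,n}}(z)=\frac{C_{n}(z)\,z^{\theta/\alpha-1}e^{-z}}{(\theta)_{(n)}\,\Gamma(\theta/\alpha)},
\]
but by different mechanisms. The paper starts from the Gamma integral for $\Gamma(\theta/\alpha+k)$ and then invokes an external integral representation (Equation 13 of \citet{Fav(15)}), namely $C_{n}(z)=e^{z}z^{n/\alpha}\int_{0}^{\infty}y^{n}e^{-yz^{1/\alpha}}f_{\alpha}(y)\,\mathrm{d}y$, after which the density of $G_{\theta+n,1}^{\alpha}S_{\alpha,\theta}$ is read off by a change of variables. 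Your Mellin argument is more self-contained: it uses only the negative stable moments $\mathbb{E}[X_{\alpha}^{-s}]=\Gamma(1+s/\alpha)/\Gamma(1+s)$ and the defining expansion $\sum_{k}\mathscr{C}(n,k;\alpha)(u)_{(k)}=(\alpha u)_{(n)}$, and therefore does not need the cited identity; the price is the extra step of justifying uniqueness, which is unproblematic here since both sides have a common strip $\mathrm{Re}\,p>-\theta/\alpha-1$ (noting that for $\theta\in(-\alpha,0)$ one has $\Gamma(\theta/\alpha)<0$ and $(\theta)_{(n)}<0$, so their product is still positive and the right-hand side is a bona fide density). The paper's route is shorter once the external identity is granted; yours is arguably cleaner in that it derives everything from first principles and exposes why the generalized factorial coefficients are the right combinatorial bridge.
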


\begin{proof}
As regard the proof of statement i), its proof relies on the classical integral representation of the Gamma function. That is, by applying the integral representation of the function $\Gamma(\theta/\alpha+k)$ to the EP-SM \eqref{eq_ewe_mod}, for $x_{1},\ldots,x_{n} \in \{0,\dots, n\}$ with $\sum_{i=1}^n x_i = k$ and $\sum_{i=1}^n ix_i = n$, we can write that
\begin{align*}
&\text{Pr}[(M_{1,n}(\alpha,\theta),\ldots,M_{n,n}(\alpha,\theta))=(x_{1},\ldots,x_{n})]\\
&\quad=n!\frac{\alpha^{k}}{\Gamma(\theta + n)} \prod_{i=1}^{n}\frac{\left(\frac{(1-\alpha)_{(i-1)}}{i!}\right)^{x_{i}}}{x_{i}!}
\frac{\Gamma(\theta+1)}{\alpha\Gamma(\theta/\alpha+1)}\\
&\quad\quad\times\int_{0}^{+\infty}z^{\theta/\alpha-1}\text{e}^{-z}\frac{z^{k}}{\sum_{j=1}^{n}\mathscr{C}(n,j;\alpha)z^{j}}\left(\sum_{j=1}^{n}\mathscr{C}(n,j;\alpha)z^{j}\right)\text{d} z\\
&\text{[By Equation 13 of \citet{Fav(15)}]}\\
&\quad=n!\frac{\alpha^{k}}{\Gamma(\theta + n)}\prod_{i=1}^{n}\frac{\left(\frac{(1-\alpha)_{(i-1)}}{i!}\right)^{x_{i}}}{x_{i}!} \frac{\Gamma(\theta+1)}{\alpha\Gamma(\theta/\alpha+1)}\\
&\quad\quad\times\int_{0}^{+\infty}z^{\theta/\alpha-1}\text{e}^{-z}\frac{z^{k}}{\sum_{j=1}^{n}\mathscr{C}(n,j;\alpha)z^{j}}\left(\text{e}^{z}z^{n/\alpha}\int_{0}^{+\infty}y^{n}\text{e}^{-yz^{1/\alpha}}f_{\alpha}(y)\text{d} y\right)\text{d} z\\
&\quad=\int_{0}^{+\infty}\frac{n!}{\sum_{j=0}^{n}\mathscr{C}(n,j,\alpha)z^{j}}\prod_{i=1}^{n}\frac{\left(z\frac{\alpha(1-\alpha)_{(i-1)}}{i!}\right)^{x_{i}}}{x_{i}!}\\
&\quad\quad\times \frac{\Gamma(\theta+1)}{\alpha\Gamma(\theta + n) \Gamma(\theta/\alpha +1)} z^{\theta/\alpha+n/\alpha-1}
\int_{0}^{+\infty}y^{n}\text{e}^{-yz^{1/\alpha}}f_{\alpha}(y)\text{d} y\text{d} z\\
&\quad=\int_{0}^{+\infty}\text{Pr}[(M_{1}(\alpha,x,n),\ldots,M_{n}(\alpha,x,n))=(x_{1},\ldots,x_{n})]\\
&\quad\quad\times \frac{\Gamma(\theta+1)}{\alpha\Gamma(\theta + n) \Gamma(\theta/\alpha +1)}
z^{\theta/\alpha+n/\alpha-1}\int_{0}^{+\infty}y^{n}\text{e}^{-yz^{1/\alpha}}f_{\alpha}(y)\text{d}y\text{d} z\\
&\text{[By the distribution of $\bar{X}_{\alpha,\theta,n}$]}\\
&\quad=\int_{0}^{+\infty}\text{Pr}[(M_{1}(\alpha,z,n),\ldots,M_{n}(\alpha,z,n))=(x_{1},\ldots,x_{n})]f_{\bar{X}_{\alpha,\theta,n}}(z)\text{d}z,
\end{align*}
where $f_{\bar{X}_{\alpha,\theta,n}}$ is the density function of the random variable $\bar{X}_{\alpha,\theta,n}$. This completes the proof of i).

As regard the proof of statement ii), for any $\alpha<0$, $m\in\mathbb{N}$, $k\leq m$ and $n\in\mathbb{N}$ we start by defining the function $m\mapsto A(m;k,\alpha,n)=\frac{m!}{(m-k)!}\frac{\Gamma(-m\alpha)}{\Gamma(-m\alpha+n)}$, and then we consider the following identity
\begin{equation}\label{gamma_id_2}
\frac{(-z)^{k}}{\sum_{j=1}^{n}\mathscr{C}(n,j;\alpha)z^{j}}=\sum_{m\geq k}A(m;k,\alpha,n)\text{Pr}[\tilde{X}_{\alpha,z,n}=m].
\end{equation}
By applying \eqref{gamma_id_2} to the NB-CPSM \eqref{eq:sampling_alpha}, for $x_{1},\ldots,x_{n} \in \{0,\dots, n\}$ with $\sum_{i=1}^n x_i = k$ 
and $\sum_{i=1}^n ix_i = n$, we write
\begin{align*}
&\text{Pr}[(M_{1}(\alpha,z,n),\ldots,M_{n}(\alpha,z,n))=(x_{1},\ldots,x_{n})]\\
&\quad=\sum_{m\geq k}n!(-1)^{k}A(m;k,\alpha,n)\text{Pr}[\tilde{X}_{\alpha,z,n}=m]\prod_{i=1}^{n}\frac{\left(\frac{\alpha(1-\alpha)_{(i-1)}}{i!}\right)^{x_{i}}}{x_{i}!}\\
&\quad=\sum_{m\geq k}n!(-1)^{k}\frac{m!}{(m-k)!}\frac{\Gamma(-m\alpha)}{\Gamma(-m\alpha+n)}\text{Pr}[\tilde{X}_{\alpha,z,n}=m]\prod_{i=1}^{n}\frac{\left(\frac{\alpha(1-\alpha)_{(i-1)}}{i!}\right)^{x_{i}}}{x_{i}!}\\
&\quad=\sum_{m\geq k}n!\frac{\left(\frac{-m\alpha}{\alpha}\right)_{(k)}}{(-m\alpha)_{(n)}}\prod_{i=1}^{n}\frac{\left(\frac{\alpha(1-\alpha)_{(i-1)}}{i!}\right)^{x_{i}}}{x_{i}!}\text{Pr}[\tilde{X}_{\alpha,z,n}=m]\\
&\quad=\sum_{m\geq k}\text{Pr}[(M_{1}(\alpha,-m\alpha),\ldots,M_{n}(\alpha,-m\alpha))=(x_{1},\ldots,x_{n})]\text{Pr}[\tilde{X}_{\alpha,z,n}=m].
\end{align*}
This completes the proof of ii).
\end{proof}

Theorem \ref{teo3} presents a compound Poisson perspective of the EP-SM in terms of the NB-CPSM, thus extending the well-known compound Poisson perspective of the E-SM in terms of the LS-CPSM. Statement i) of Theorem \ref{teo3} shows that for $\alpha\in(0,1)$ and $\theta>-\alpha$ the EP-SM admits a representation in terms of the NB-CPSM with $\alpha\in(0,1)$ and $z>0$, where the randomization acts on the parameter $z$ with respect to the distribution \eqref{distr_lat_1}. Precisely, this is a compound mixed Poisson sampling model. That is, a compound sampling model in which the distribution of the random number $K$ of distinct types in the population is a mixture of Poisson distributions with respect to the law of $\bar{X}_{\alpha,\theta,n}$. Statement ii) of Theorem \ref{teo3} shows that for $\alpha<0$ and $z<0$ the NB-CPSM admits a representation in terms of a randomized EP-SM with $\alpha<0$ and $\theta=-m\alpha$ for some $m\in\mathbb{N}$, where the randomization acts on the parameter $m$ with respect to the distribution \eqref{distr_lat_1}. 

\begin{remark}
The randomization procedure introduced in Theorem \ref{teo3} is somehow reminiscent of the definition of the class of Gibbs-type sampling models introduced in \citet{Gne(06)}. This class is defined from the EP-SM with $\alpha<0$ and $\theta=-m\alpha$, for some $m\in\mathbb{N}$, and then it assume that the parameter $m$ is distributed according to an arbitrary distribution on $\mathbb{N}$. See Theorem 12 of \citet{Gne(06)}, and \citet{Gne(10)} for an example. However, differently from the definition of \citet{Gne(06)}, in our context the distribution on $m$ depends on the sample size $n$.
\end{remark}

For $\alpha\in(0,1)$ and $\theta>-\alpha$, \citet{Pit(06)} first investigated the large $n$ asymptotic behaviour of $K_{n}(\alpha,\theta)$. See also \citet{Gne(06)}, and references therein. Let $\stackrel{a.s.}{\longrightarrow}$ denote the almost sure convergence for random variables, and let $S_{\alpha,\theta}$ be the scaled Mittag-Leffler random variable defined above. Theorem 3.8 of \citet{Pit(06)} exploited a martingale convergence argument to show that
\begin{equation}\label{limit_k}
\frac{K_{n}(\alpha,\theta)}{n^{\alpha}}\stackrel{a.s.}{\longrightarrow}S_{\alpha,\theta}
\end{equation}
as $n\rightarrow+\infty$. The random variable $S_{\alpha,\theta}$ is typically referred to as Pitman's $\alpha$-diversity. For $\alpha<0$ and $\theta=-m\alpha$, for some $m\in\mathbb{N}$, the large $n$ asymptotic behaviour of $K_{n}(\alpha,\theta)$ is trivial, that is it holds
\begin{equation}\label{limit_k_neg}
K_{n}(\alpha,\theta)\stackrel{w}{\longrightarrow}m
\end{equation}
as $n\rightarrow+\infty$. See \citet{DF(20a),DF(20b)} for Berry-Esseen type refinements of the large $n$ asymptotic behaviour \eqref{limit_k}, and to \citet{Fav(09),Fav(12)} and \citet{Fav(15)} for generalizations of \eqref{limit_k} with applications to Bayesian nonparametric inference for species sampling problems. See also \citet[Chapter 4]{Pit(06)} for a general treatment of \eqref{limit_k}. According to Theorem \ref{teo3}, it is natural to ask weather there exists an interplay between Theorem \ref{teo1} and the large $n$ asymptotic behaviours \eqref{limit_k} and \eqref{limit_k_neg}.  Hereafter, we show that: i) \eqref{limit_k}, with the almost sure convergence replaced by the convergence in distribution, arises by combining \eqref{cp_k_pos} with i) of Theorem \ref{teo3}; ii) \eqref{cp_k_neg} arises by combining \eqref{limit_k_neg} with ii) of Theorem \ref{teo3}. This provides with an alternative proof of Pitman's $\alpha$-diversity.

\begin{theorem}\label{teo4} 
Let $K_{n}(\alpha,\theta)$ and $K(\alpha,z,n)$ under the EP-SM and the NB-CPSM, respectively. As $n\rightarrow+\infty$
\begin{itemize}
\item[i)] for $\alpha\in(0,1)$ and $\theta>-\alpha$
\begin{equation}\label{limit_k1}
\frac{K_{n}(\alpha,\theta)}{n^{\alpha}}\stackrel{w}{\longrightarrow}S_{\alpha,\theta}.
\end{equation}
\item[ii)] for $\alpha<0$ and $z<0$
\begin{equation}\label{k_negneg}
\frac{K(\alpha,z,n)}{n^{\frac{-\alpha}{1-\alpha}}}\stackrel{w}{\longrightarrow} \frac{(\alpha z)^{\frac{1}{1-\alpha}}}{-\alpha}.
\end{equation}
\end{itemize}
\end{theorem}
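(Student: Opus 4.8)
The plan is to obtain each limit by substituting the mixture representations of Theorem~\ref{teo3} into the counting-statistic asymptotics of Theorem~\ref{teo1} and \eqref{limit_k_neg}, the key point being that in both directions the randomizing parameter, once correctly rescaled, concentrates at exactly the value that produces the stated limit. For i) I would start from Theorem~\ref{teo3} i), which gives $K_{n}(\alpha,\theta)\stackrel{d}{=}K(\alpha,\bar{X}_{\alpha,\theta,n},n)$ with $\bar{X}_{\alpha,\theta,n}\stackrel{d}{=}G^{\alpha}_{\theta+n,1}S_{\alpha,\theta}$. Since $G_{\theta+n,1}/n\to1$ almost surely, one has $\bar{X}_{\alpha,\theta,n}=n^{\alpha}W_{n}$ with $W_{n}\to S_{\alpha,\theta}$, so the relevant regime for the NB-CPSM is $z$ growing like $n^{\alpha}$ rather than fixed as in \eqref{cp_k_pos}. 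I would therefore pass to Laplace transforms: writing $G(\cdot;\alpha,z,n)$ for the probability generating function from the proof of Theorem~\ref{teo1},
\[
\mathbb{E}\big[e^{-\lambda K_{n}(\alpha,\theta)/n^{\alpha}}\big]=\mathbb{E}\big[G(e^{-\lambda/n^{\alpha}};\alpha,\bar{X}_{\alpha,\theta,n},n)\big],
\]
and the task becomes the analysis of $G(e^{-\lambda/n^{\alpha}};\alpha,n^{\alpha}\xi,n)$ for fixed $\xi>0$.

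Writing $G(s;\alpha,z,n)=e^{z(s-1)}R_{n}(s,z)$ as in that proof, the prefactor satisfies $e^{z(s-1)}\to e^{-\lambda\xi}$ when $z=n^{\alpha}\xi$ and $s=e^{-\lambda/n^{\alpha}}$, so it remains to show $R_{n}(e^{-\lambda/n^{\alpha}},n^{\alpha}\xi)\to1$. The elementary majorization used for \eqref{cp_k_pos} is unavailable here, as the series it controls diverges once $z\sim n^{\alpha}$; instead I would exploit the stable integral representation $\sum_{j=1}^{n}\mathscr{C}(n,j;\alpha)z^{j}=e^{z}z^{n/\alpha}\int_{0}^{+\infty}y^{n}e^{-yz^{1/\alpha}}f_{\alpha}(y)\,\mathrm{d}y$ used in the proof of Theorem~\ref{teo3}. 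After the substitution $y\mapsto ys^{1/\alpha}$ this recasts $R_{n}(s,z)$ as $s^{-1/\alpha}$ times the ratio $\mathbb{E}_{\mu_{n}}[f_{\alpha}(Us^{-1/\alpha})]/\mathbb{E}_{\mu_{n}}[f_{\alpha}(U)]$, where $\mu_{n}$ is the $\mathrm{Gamma}(n+1,z^{1/\alpha})$ law proportional to $u^{n}e^{-uz^{1/\alpha}}\,\mathrm{d}u$. For $z=n^{\alpha}\xi$ this law concentrates at its mean $\xi^{-1/\alpha}$, so boundedness and continuity of $f_{\alpha}$ force the ratio to $1$, while $s^{-1/\alpha}\to1$; hence $R_{n}\to1$ and $G(e^{-\lambda/n^{\alpha}};\alpha,n^{\alpha}\xi,n)\to e^{-\lambda\xi}$. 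A dominated-convergence step over the law of $W_{n}\Rightarrow S_{\alpha,\theta}$ then gives $\mathbb{E}[e^{-\lambda K_{n}(\alpha,\theta)/n^{\alpha}}]\to\mathbb{E}[e^{-\lambda S_{\alpha,\theta}}]$, which is \eqref{limit_k1}.

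For ii) I would run the scheme in reverse through Theorem~\ref{teo3} ii), writing $K(\alpha,z,n)\stackrel{d}{=}K_{n}(\alpha,-\tilde{X}_{\alpha,z,n}\alpha)$ with $\tilde{X}_{\alpha,z,n}$ the tilted Poisson variable of \eqref{distr_lat_2}. Two ingredients are needed. First, that $\tilde{X}_{\alpha,z,n}/n^{-\alpha/(1-\alpha)}\to(\alpha z)^{1/(1-\alpha)}/(-\alpha)$ in probability: a direct computation shows that the probability generating function of $\tilde{X}_{\alpha,z,n}$ equals $e^{\zeta(r-1)}B_{n}(r\zeta)/B_{n}(\zeta)$ (with $\alpha=-\sigma$, $z=-\zeta$), so evaluating it at $r_{n}=r^{n^{-\sigma/(1+\sigma)}}$ and inserting the asymptotics \eqref{BnAsymptotic} yields the same degenerate limit obtained in the proof of \eqref{cp_k_neg}, the factor $e^{\zeta(r_{n}-1)}\to1$ being negligible. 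Second, for the EP-SM with $\alpha<0$ and $\theta=-m\alpha$, which samples from a symmetric Dirichlet on $m$ categories, I would strengthen \eqref{limit_k_neg} to $K_{n}(\alpha,-m\alpha)/m\to1$ when $m=m_{n}\to+\infty$ at the rate $n^{-\alpha/(1-\alpha)}$, by noting that the expected number of unobserved categories is of order $m^{1+\sigma}/n^{\sigma}=O(1)$, hence $o(m)$. Conditioning on $\tilde{X}_{\alpha,z,n}=m$ and combining the two facts gives $K(\alpha,z,n)\approx\tilde{X}_{\alpha,z,n}$, and therefore \eqref{k_negneg}.

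In both parts the crux is the passage from a fixed randomizing parameter to one that grows with $n$. For i) this is precisely the identity $R_{n}\to1$ in the regime $z\sim n^{\alpha}$: the bound that sufficed for \eqref{cp_k_pos} is lost, and one must instead read off the $\mathrm{Gamma}$ concentration from the stable integral representation and render it uniform in $\xi$ on compact sets (jointly with $s=e^{-\lambda/n^{\alpha}}\to1$), so that the concluding dominated-convergence step is legitimate. For ii) the delicate ingredient is the block-saturation estimate $K_{n}(\alpha,-m_{n}\alpha)/m_{n}\to1$ for $m_{n}\to+\infty$, a coupon-collector bound on the deficit between the number of categories and the number actually sampled, which must be carried out jointly with the randomness of $\tilde{X}_{\alpha,z,n}$.
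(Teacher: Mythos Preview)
Your overall strategy is sound, and for part~ii) it coincides with the paper's: represent $K(\alpha,z,n)\stackrel{d}{=}K_{n}(\alpha,-\tilde{X}_{\alpha,z,n}\alpha)$ via Theorem~\ref{teo3}~ii), show $\tilde{X}_{\alpha,z,n}/n^{-\alpha/(1-\alpha)}$ converges to the stated constant by reading the pgf off the asymptotics \eqref{BnAsymptotic}, and combine this with block saturation $K_{n}/\tilde{X}\to1$. Your formula $\mathbb{E}[r^{\tilde{X}_{\alpha,z,n}}]=e^{\zeta(r-1)}B_{n}(r\zeta)/B_{n}(\zeta)$ is in fact the correct one (the paper drops the harmless prefactor), and your coupon-collector remark supplies the justification the paper leaves as ``an obvious conditioning argument''.

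For part~i) your route is genuinely different from the paper's. The paper does \emph{not} work with Laplace transforms or with the ratio $R_{n}$ in the regime $z\sim n^{\alpha}$; instead it imports a Berry--Esseen type bound (Corollary~2 of \citet{DF(20a)}) to obtain $\mathrm{d}_{TV}(K(\alpha,tn^{\alpha},n);\,1+P_{tn^{\alpha}})\to0$, and then runs a three-term triangle inequality in Kolmogorov distance through the intermediaries $K(\alpha,n^{\alpha}S_{\alpha,\theta},n)$ and $1+P_{n^{\alpha}S_{\alpha,\theta}}$, handling the first term by an explicit comparison of $d_{n}(t)=\sum_{j}\mathscr{C}(n,j;\alpha)(tn^{\alpha})^{j}$ with $d^{\ast}_{n}(t)=e^{tn^{\alpha}}(n-1)!\,t^{-1/\alpha}f_{\alpha}(t^{-1/\alpha})$. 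Your approach instead exploits the stable integral identity for $\sum_{j}\mathscr{C}(n,j;\alpha)z^{j}$ (already used in the proof of Theorem~\ref{teo3}) to rewrite $R_{n}(s,z)$ as $s^{-1/\alpha}\,\mathbb{E}_{\mu_{n}}[f_{\alpha}(Us^{-1/\alpha})]/\mathbb{E}_{\mu_{n}}[f_{\alpha}(U)]$ with $\mu_{n}=\mathrm{Gamma}(n+1,z^{1/\alpha})$, and then lets the Gamma concentration do the work. This is more self-contained (no appeal to \citet{DF(20a)} or \citet{Adell(06)}) and conceptually cleaner, at the cost of having to establish the uniformity of $R_{n}(e^{-\lambda/n^{\alpha}},n^{\alpha}\xi)\to1$ on compact sets in $\xi$ and the legitimacy of the final dominated-convergence step---points you correctly flag as the crux. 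The paper's decomposition, by contrast, isolates those same issues into three distance terms and dispatches them with off-the-shelf bounds; it is less elementary but leaves fewer analytic details to the reader.
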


\begin{proof}
We show that \eqref{limit_k1} arises by combining \eqref{cp_k_pos} with statement i) of Theorem \ref{teo3}. For any pair of $\mathbb{N}$-valued random variables $U$ and $V$, let $\mathrm{d}_{TV}(U;V)$ be the total variation distance between the distribution of the random variale $U$ and the distribution of the random variable $V$. Also, let $P_{c}$ denote a Poisson random variable with parameter $c>0$. For any $\alpha\in(0,1)$ and $t>0$, we show that as $n\rightarrow+\infty$
\begin{equation} \label{dTV}
\mathrm{d}_{TV}(K(\alpha,tn^{\alpha},n); 1 + P_{tn^{\alpha}}) \rightarrow 0.
\end{equation}
This implies \eqref{limit_k1}. The proof of \eqref{dTV} requires a careful analysis of the probability generating function of $K(\alpha,tn^{\alpha},n)$. In particular, let us define $\omega(t; n, \alpha) := tn^{\alpha} + \frac{t M'_{\alpha}(t)}{M_{\alpha}(t)}$, where $M_{\alpha}(t) := \frac{1}{\pi} \sum_{m=1}^{\infty} \frac{(-t)^{m-1}}{(m-1)!} \Gamma(\alpha m) \sin(\pi\alpha m)$ is the Wright-Mainardi function \citep{Mai(10)}. Then, we apply Corollary 2 of \citet{DF(20a)} in order to conclude that $\mathrm{d}_{TV}(K(\alpha,tn^{\alpha},n); 1 + P_{\omega(t; n, \alpha)}) \rightarrow 0$ as $n\rightarrow+\infty$. Finally, we apply the inequality (2.2) in \citet{Adell(06)} to obtain
\begin{align*}
\mathrm{d}_{TV}(1 + P_{tn^{\alpha}}; 1 + P_{\omega(t; n, \alpha)}) &= \mathrm{d}_{TV}(P_{tn^{\alpha}}; P_{\omega(t; n, \alpha)})\leq \frac{t M'_{\alpha}(t)}{M_{\alpha}(t)} 
\min\left\{1, \frac{\sqrt{(2/e)}}{ \sqrt{\omega(t; n, \alpha)} + \sqrt{tn^{\alpha}} } \right\}
\end{align*}
so that $\mathrm{d}_{TV}(1 + P_{tn^{\alpha}}; 1 + P_{\omega(t; n, \alpha)}) \rightarrow 0$ as $n \rightarrow +\infty$, and \eqref{dTV} follows. Now, keeping $\alpha$ and $t$ fixed as above, we show that \eqref{dTV} entails \eqref{limit_k1}. To this aim, here we introduce the Kolmogorov distance $\mathrm{d}_{K}$ which, for any pair of $\mathbb{R}_{+}$-valued random variables $U$ and $V$, is defined by $\mathrm{d}_{K}(U;V) := 
\sup_{x \geq 0} |\text{Pr}[U\leq x] - \text{Pr}[V\leq x]|$. In particular, the claim that has to be proved is equivalent to the following claim
\begin{displaymath}
\mathrm{d}_{K}(K_{n}(\alpha,\theta)/n^{\alpha}; S_{\alpha,\theta}) \rightarrow 0
\end{displaymath}
as $n\rightarrow+\infty$. In particular, we exploit statement i) of Theorem \ref{teo3}. This leads to the distributional identity $K_{n}(\alpha,\theta) \stackrel{d}{=}K(\alpha,\bar{X}_{\alpha,\theta,n},n)$. Thus, in view of the basic properties of the Kolmogorov distance,
\begin{align} \label{triangular}
\mathrm{d}_{K}(K_{n}(\alpha,\theta)/n^{\alpha}; S_{\alpha,\theta})&\leq \mathrm{d}_{K}(K_{n}(\alpha,\theta); K(\alpha, n^{\alpha} S_{\alpha,\theta},n))\\
&\quad\quad + \mathrm{d}_{K}(K(\alpha, n^{\alpha} S_{\alpha,\theta},n); 1 + P_{n^{\alpha}S_{\alpha,\theta}}) \nonumber \\
&\notag\quad\quad\quad+ \mathrm{d}_{K}([1 + P_{n^{\alpha}S_{\alpha,\theta}}]/n^{\alpha}; S_{\alpha,\theta}), 
\end{align}
where the $\{P_{\lambda}\}_{\lambda \geq 0}$ is thought of here as a homogeneous Poisson process with rate 1, independent of $S_{\alpha,\theta}$. The desired conclusion will be reached as soon
as we will prove that all the three summands on the right-hand side of \eqref{triangular} go to zero as $n \rightarrow +\infty$. Before proceeding, we recall that $\mathrm{d}_{K}(U;V) \leq \mathrm{d}_{TV}(U;V)$. Therefore,
\begin{align*}
&\mathrm{d}_{K}(K_{n}(\alpha,\theta); K(\alpha, n^{\alpha} S_{\alpha,\theta},n)) \\
&\leq \frac 12 \sum_{k=1}^n \Big| \mathscr{C}(n,k;\alpha) \frac{\Gamma(k + \theta/\alpha)}{\alpha\Gamma(\theta/\alpha+1)}
\frac{\Gamma(\theta+1)}{\Gamma(n+\theta)} - \int_0^{+\infty} \frac{\mathscr{C}(n,k;\alpha) (tn^{\alpha})^k}{d_n(t)} f_{S_{\alpha,\theta}}(t) \mathrm{d}t \Big|
\end{align*}
with $d_n(t) := \sum_{j=1}^n \mathscr{C}(n,j;\alpha) (tn^{\alpha})^j$. Now, let us define the following quantity: $d^{\ast}_n(t) := e^{tn^{\alpha}} (n-1)! \frac{1}{t^{1/\alpha}} f_{\alpha}(\frac{1}{t^{1/\alpha}})$. Accordingly, we can majorize the above right-hand side by means of the following quantity
\begin{align*}
&\frac 12 \sum_{k=1}^n \Big| \mathscr{C}(n,k;\alpha) \frac{\Gamma(k + \theta/\alpha)}{\alpha\Gamma(\theta/\alpha+1)}
\frac{\Gamma(\theta+1)}{\Gamma(n+\theta)} - \int_0^{+\infty} \frac{\mathscr{C}(n,k;\alpha) (tn^{\alpha})^k}{d^{\ast}_n(t)} f_{S_{\alpha,\theta}}(t) \mathrm{d}t \Big| \\
&+\frac 12\int_0^{+\infty} \frac{|d^{\ast}_n(t) - d_n(t)|}{d^{\ast}_n(t)}f_{S_{\alpha,\theta}}(t) \mathrm{d}t \ . 
\end{align*}
Accordingly, by exploiting the identity $\int_0^{+\infty} \frac{(tn^{\alpha})^k}{d^{\ast}_n(t)}f_{S_{\alpha,\theta}}(t) \mathrm{d}t = \frac{1}{(n-1)!} \frac{\Gamma(k + \theta/\alpha)}{n^{\theta}}
\frac{\Gamma(\theta+1)}{\alpha\Gamma(\theta/\alpha +1)}$, we can write that
\begin{align*}
&\sum_{k=1}^n \Big| \mathscr{C}(n,k;\alpha) \frac{\Gamma(k + \theta/\alpha)}{\alpha\Gamma(\theta/\alpha+1)}
\frac{\Gamma(\theta+1)}{\Gamma(n+\theta)} - \int_0^{+\infty} \frac{\mathscr{C}(n,k;\alpha) (tn^{\alpha})^k}{d^{\ast}_n(t)} f_{S_{\alpha,\theta}}(t) \mathrm{d}t \Big| =\Big| 1 - \frac{\Gamma(n+\theta)}{\Gamma(n)n^{\theta}}\Big|
\end{align*}
which goes to zero as $n \rightarrow +\infty$ for any $\theta > -\alpha$, by a direct application of Stirling's approximation. To show that the integral $\int_0^{+\infty} \frac{|d^{\ast}_n(t) - d_n(t)|}{d^{\ast}_n(t)}f_{S_{\alpha,\theta}}(t) \mathrm{d}t$ also goes to zero as $n \rightarrow +\infty$, we may resort to the identities (13)--(14) of \citet{DF(20a)}, as well as Lemma 3 in \citet{DF(20a)}. In particular, let $\Delta\,:\,(0,+\infty)\rightarrow(0,+\infty)$ denote a suitable continuous function independent of $n$, and such that $\Delta(z)=O(1)$ as $z\rightarrow0$ and $\Delta(z)f_{\alpha}(1/z)=O(z^{-\infty})$ as $z\rightarrow+\infty$. Then, we write that
\begin{align*}
&\int_0^{+\infty} \frac{|d^{\ast}_n(t) - d_n(t)|}{d^{\ast}_n(t)}f_{S_{\alpha,\theta}}(t) \mathrm{d}t \\
& \leq \Big| \frac{(n/e)^n \sqrt{2\pi n}}{n!} - 1\Big| + \left(\frac{(n/e)^n \sqrt{2\pi n}}{n!}\right) \frac{1}{n} \int_0^{+\infty} \Delta(t^{1/\alpha}) f_{S_{\alpha,\theta}}(t) \mathrm{d}t\ .
\end{align*}
Since we have that $\int_0^{+\infty} \Delta(t^{1/\alpha}) f_{S_{\alpha,\theta}}(t) \mathrm{d}t < +\infty$ by Lemma 3 of \citet{DF(20a)}, both the summands on the above right-hand side go to zero as $n \rightarrow +\infty$, again by a direct application of Stirling's approximation. Thus, the first summand on the right-hand side of \eqref{triangular} goes to zero as $n \rightarrow +\infty$. As for the second summand on the right-hand side of \eqref{triangular}, it can be bounded by the quantity
\begin{displaymath}
\int_0^{+\infty} \mathrm{d}_{TV}(K(\alpha,tn^{\alpha},n); 1 + P_{tn^{\alpha}}) f_{S_{\alpha,\theta}}(t) \mathrm{d}t\ .
\end{displaymath}
By a dominated convergence argument, this quantity goes to zero as $n \rightarrow +\infty$ as a consequence of \eqref{dTV}. Finally, for the third summand on the right-hand side of \eqref{triangular}, we can resort to a conditioning argument in order to reduce the problem to a direct application of the well-known law of large numbers for renewal processes. See, e.g. \cite[Section 10.2]{GS(06)} and references therein. In particular, this leads to $n^{-\alpha} P_{tn^{\alpha}} \stackrel{a.s.}{\longrightarrow} t$ for any $t > 0$, which entails that 
$n^{-\alpha} P_{n^{\alpha}S_{\alpha,\theta}} \stackrel{a.s.}{\longrightarrow} S_{\alpha,\theta}$ as $n \rightarrow +\infty$.
Thus, this third term also goes to zero as $n \rightarrow +\infty$, and \eqref{limit_k1} follows.

Now, we consider \eqref{k_negneg}, showing that it arises by combining \eqref{limit_k_neg} with statement ii) of Theorem \ref{teo3}. In particular, by an obvious conditioning argument, we can write that as $n\rightarrow+\infty$ it holds true that 
\begin{displaymath}
\frac{K_{n}(\alpha,\tilde{X}_{\alpha,z,n}|\alpha|)}{\tilde{X}_{\alpha,z,n}} \stackrel{a.s.}{\longrightarrow} 1.
\end{displaymath}
At this stage, we consider the probability generating function of the random variable $\tilde{X}_{\alpha,z,n}$, and therefore we immediately obtain $\mathbb{E}[s^{\tilde{X}_{\alpha,z,n}}] := B_n(-sz)/B_n(-z)$ for $n \in \mathbb{N}$ and $s \in [0,1]$
with the same $B_n$ as in \eqref{bernardoNEW} and \eqref{BnWright}. Therefore, the asymptotic expansion we already provided in \eqref{BnAsymptotic} entails 
\begin{equation}\label{eq_teo4}
\frac{\tilde{X}_{\alpha,z,n}}{n^{\frac{-\alpha}{1-\alpha}}}  \stackrel{w}{\longrightarrow} \frac{(\alpha z)^{\frac{1}{1-\alpha}}}{-\alpha}
\end{equation}
as $n\rightarrow+\infty$. In particular, \eqref{eq_teo4} follows by applying exactly the same arguments used to prove \eqref{cp_k_neg}. Now, since
\begin{displaymath}
\frac{K_{n}(\alpha,\tilde{X}_{\alpha,z,n}|\alpha|)}{n^{\frac{-\alpha}{1-\alpha}}} \stackrel{d}{=} \frac{K_{n}(\alpha,\tilde{X}_{\alpha,z,n}|\alpha|)}{\tilde{X}_{\alpha,z,n}} \frac{\tilde{X}_{\alpha,z,n}}{n^{\frac{-\alpha}{1-\alpha}}} ,
\end{displaymath}
the claim follows from a direct application of well-know Slutsky's theorem. This completes the proof.
\end{proof}


\section{Discussion} The NB-CPSM is a compound Poisson sampling model generalizing the popular LS-CMSM. In this paper, we introduced a compound Poisson perspective of the EP-SM in terms of the NB-CPSM, thus extending the well-known compound Poisson perspective of the E-SM in terms of the LS-CPSM. We conjecture that an analogous perspective holds true for the class of $\alpha$-stable Poisson-Kingman sampling models \citep{Pit(03),Pit(06)}, of which the EP-SM is a noteworthy special case. That is, for $\alpha\in(0,1)$, we conjecture that an $\alpha$-stable Poisson-Kingman sampling model admits a representation as a randomized NB-CPSM with $\alpha\in(0,1)$ and $z>0$, where the randomization acts on $z$ with respect a scale mixture between a Gamma and a suitable transformation of the Mittag-Leffler distribution. We believe that such a compound Poisson representation would be critical in order to introduce Berry-Esseen type refinements of the large $n$ asymptotic behaviour of $K_{n}$ under $\alpha$-stable Poisson-Kingman sampling models. See \citet[Section 6.1]{Pit(03)}, and references therein. Such a line of research aims at extending preliminary works of \citet{DF(20a),DF(20b)} on Berry-Esseen type theorems  under the EP-SM. Work on this, and on the more general settings induced by normalized random measures \citep{Reg(03)} and Poisson-Kingman models \citep{Pit(03)}, is ongoing.

\section*{Acknowledgement}

Emanuele Dolera and Favaro received funding from the European Research Council (ERC) under the European Union's Horizon 2020 research and innovation programme under grant agreement No 817257. Emanuele Dolera and Stefano Favaro gratefully acknowledge the financial support from the Italian Ministry of Education, University and Research (MIUR), ``Dipartimenti di Eccellenza" grant 2018-2022.




\begin{thebibliography}{9}

\bibitem[Adell and Jodr\'a(2006)]{Adell(06)}
\textsc{Adell, J.A. and Jodr\'a, P.} (2006). Exact Kolmogorov and total variation distances between some familiar discrete distributions. \textit{Journal of Inequalities and Applications} \textbf{ID 64307}.

\bibitem[Berg(1958)]{Ber(58)}
\textsc{Berg, L.} (1958). Asymptotische darstellungen f\"ur integrale und reihen mit anwendungen. \textit{Mathematische Nachrichten} \textbf{17}, 101-135.

\bibitem[Charalambides(2005)]{Cha(05)}
\textsc{Charalambides} (2005) \textit{Combinatorial methods in discrete distributions} Wiley.

\bibitem[Charalambides(2007)]{Cha(07)}
\textsc{Charalambides} (2007) Distributions of random partitions and their applications. \textit{Methodology and Computing in Applied Probability} \textbf{9}, 163--193.

\bibitem[Crane(2016)]{Cra(16)}  
\textsc{Crane, H.} (2016). The ubiquitous Ewens sampling formula. \textit{Statistical Science}, \textbf{31}, 1--19.

\bibitem[Dolera and Favaro(2020a)]{DF(20a)}
\textsc{Dolera, E. and Favaro, S.} (2020). A Berry--Esseen theorem for Pitman's $\alpha$--diversity. \textit{Annals of Applied Probability} \textbf{30}, 847--869.

\bibitem[Dolera and Favaro(2020b)]{DF(20b)}
\textsc{Dolera, E. and Favaro, S.} (2020). Rates of convergence in de Finetti's representation theorem, and Hausdorff moment problem. \textit{Bernoulli} \textbf{26}, 1294--1322.

\bibitem[Ewens(1972)]{Ewe(72)}
\textsc{Ewens, W.} (1972). The sampling theory or selectively neutral alleles. \textit{Theoretical Population Biology} \textbf{3}, 87--112.

\bibitem[Favaro and James(2015)]{Fav(15)}
\textsc{Favaro, S. and James, L.F.} (2015). A note on nonparametric inference for species variety with Gibbs-type priors. \textit{Electronic Journal of Statistics}, \textbf{9}, 2884--2902

\bibitem[Favaro et al.(2012)]{Fav(12)}
\textsc{Favaro, S., Lijoi, A. and Pr\"unster, I.} (2012). Asymptotics for a Bayesian nonparametric estimator of species richness. \textit{Bernoulli}, \textbf{18}, 1267--1283

\bibitem[Favaro et al.(2009)]{Fav(09)}  
\textsc{Favaro, S., Lijoi, A., Mena, R.H. and Pr\"unster, I.} (2009). Bayesian nonparametric inference for species variety with a two parameter Poisson-Dirichlet process prior. \textit{Journal of the Royal Statistical Society Series B} \textbf{71}, 992--1008.

\bibitem[Favaro et al.(2015)]{Fav(15)}
\textsc{Favaro, S., Nipoti, B. and Teh, Y.W.} (2015). Random variate generation for Laguerre-type exponentially tilted alpha-stable distributions. \textit{Electronic Journal of Statistics}, \textbf{9}, 1230--1242.

\bibitem[Ferguson(1973)]{Fer(73)}
\textsc{Ferguson, T.S.} (1973). A Bayesian analysis of some nonparametric problems. \textit{Annals of Statistics} \textbf{1}, 209--230.

\bibitem[Gnedin(2010)]{Gne(10)}
\textsc{Gnedin, A.} (2010). A species sampling model with finitely many types. \textit{Electronic Communication in Probability} \textbf{8}, 79--88.

\bibitem[Gnedin and Pitman(2006)]{Gne(06)}
\textsc{Gnedin, A. and Pitman, J.} (2006). Exchangeable Gibbs partitions and Stirling triangles. \textit{Journal of Mathematical Sciences}, \textbf{138}, 5674--5685.

\bibitem[Grimmett and Stirzaker(2001)]{GS(06)}
\textsc{Grimmett, G. and Stirzaker, D.} (2001). \textit{Probability and random processes}. Oxford University Press.

\bibitem[Korwar and Hollander(1973)]{Kor(73)}
\textsc{Korwar, R.M. and Hollander, M.} (1973). Contributions to the theory of Dirichlet processes. \textit{Annals of Statistics} \textbf{1}, 705--711.

\bibitem[Mainardi et al.(2010)]{Mai(10)}
\textsc{Mainardi, F., Mura, A. and Pagnini, G.} (2010). The M-Wright function in time-fractional diffusion processes: a tutorial survey. \textit{International Journal of Differential Equations} \textbf{ID 104505}.

\bibitem[Perman et al.(1992)]{Per(92)}
\textsc{Perman, M., Pitman, J. and Yor, M.} (1992). Size-biased sampling of Poisson point processes and excursions. \textit{Probability Theory and Related Fields}, \textbf{92}, 21--39.

\bibitem[Pitman(1995)]{Pit(95)}
\textsc{Pitman, J.} (1995). Exchangeable and partially exchangeable random partitions. \textit{Probability Theory and Related Fields}, \textbf{102}, 145--158.

\bibitem[Pitman(2003)]{Pit(03)}
\textsc{Pitman, J.} (2003). Poisson-Kingman partitions. In \textit{Science and Statistics: A Festschrift for Terry Speed}, Goldstein, D.R. Eds. Institute of Mathematical Statistics.

\bibitem[Pitman and Yor(1997)]{Pit(97)}
\textsc{Pitman, J. and Yor, M.} (1997). The two parameter Poisson-Dirichlet distribution derived from a stable subordinator. \textit{Annals of Probability} \textbf{25}, 855--900.

\bibitem[Pitman(2006)]{Pit(06)}
\textsc{Pitman, J.} (2006). \textit{Combinatorial stochastic processes}. Lecture Notes in Mathematics, Springer Verlag.

\bibitem[Regazzini et al.(2003)]{Reg(03)}
\textsc{Regazzini, E., Lijoi, A., and Pr\"unster, I.} (2003). Distributional results for means of normalized random measures with independent increments. \textit{The Annals of Statistics} \textbf{31}, 560--585.

\bibitem[Wright(1935)]{Wri(35)}
\textsc{Wright, E.M.} (1935). The asymptotic expansion of the generalized Bessel function. \textit{Proceedings of the London Mathematical Society} \textbf{38}, 257-270.



\end{thebibliography}
\end{document}